\theoremstyle{plain}
\newtheorem{theorem}{Theorem}[section]
\newtheorem{definition}[theorem]{Definition}
\newtheorem{lemma}[theorem]{Lemma}
\newtheorem{proposition}[theorem]{Proposition}
\newtheorem{remark}[theorem]{Remark}
\numberwithin{equation}{section}
\begin{document}

\title[Betti numbers of an Arf semigroups]{On the relation between Betti numbers of an Arf semigroup and its blowup}

\author[Micha\l\ Laso\'{n}]{Micha\l\ Laso\'{n}}
\address{Institute of Mathematics of the Polish Academy of Sciences, \'{S}niadeckich 8, 00-956 Warszawa, Poland}
\address{Theoretical Computer Science Department, Faculty of Mathematics and Computer Science, Jagiellonian University, S. {\L}ojasiewicza 6, 30-348 Krak\'{o}w, Poland}
\email{michalason@gmail.com}

\subjclass{13D99, 20M50, 20M14, 20M25}

\keywords{Betti numbers, Arf semigroup, blowup of a semigroup}

\thanks{Author is supported by the grant of Polish MNiSzW N N201 413139.} 

\date{2011/08/10}

\def\n{\mathbb{N}}
\def\k{\mathbb{K}}

\begin{abstract}
In this note we prove the relation between Betti numbers of an Arf semigroup $S$ and its blowup $S'$ in the case
when they have the same multiplicity $n$. The relation is then $\beta_{i,s}(S')=\beta_{i,{s+(i+1)n}}(S)$.
\end{abstract}

\maketitle

\section{Introduction}

\begin{definition} A numerical semigroup is a subset of $\n$ that is closed under addition, contains zero and with finite
complement in $\n$.\end{definition}

In fact it is even a monoid, but in this setting it is usually called as above.
A subset $T$ of $S$ generates it if every element of $S$ is a linear combination of elements of $T$ with integer
coefficients. Every generating set $T$ contains the minimum set of generators, denoted by $G(S)$, which are exactly nonzero
elements that can not be represented as a sum of two nonzero elements of $S$. 

\begin{definition} The smallest generator in $G(S)$ is called the multiplicity of $S$ and the number of elements in $G(S)$ the
embedding dimension of $S$.\end{definition}

\begin{remark} The multiplicity is greater or equal to embedding dimension. \end{remark}

\begin{definition} An Arf semigroup is a numerical semigroup $S$ such that for each $n\in S$ the set $S(n)$ is also a semigroup, where $S(n)=\{s-n:\;s\in S,\;s\geq n\}$.\end{definition}

Arf property is equivalent to: if $s,t,u \in S$ and $s,t\geq u$ then $s+t-u\in S$.

\begin{proposition} For Arf semigroups multiplicity and embedding dimension are equal.\end{proposition}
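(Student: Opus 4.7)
The plan is to use residues modulo the multiplicity $n$ to parametrize a canonical minimal generating set, and then let the Arf property force these representatives to be indecomposable. First I would introduce, for each residue $r \in \{0,1,\dots,n-1\}$, the element $a_r := \min\{s \in S : s \equiv r \pmod{n}\}$; this is well-defined because $\n \setminus S$ is finite, and $a_0 = 0$. One easy observation is that $G(S) \subseteq \{n\} \cup \{a_r : 1 \leq r \leq n-1\}$: if $g \in G(S)$ with $g \neq n$ lies in residue class $r$ but $g \neq a_r$, then $g = a_r + kn$ with $k \geq 1$, and (noting $a_r > 0$ when $r \neq 0$, and $g = kn$ with $k \geq 2$ when $r = 0$) this expresses $g$ as a sum of two nonzero elements of $S$, contradicting minimality. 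This inclusion already recovers the inequality in the preceding remark.

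The substantive step is the converse, and this is where the Arf hypothesis enters. I claim each $a_r$ with $r \neq 0$ is indecomposable. Suppose $a_r = s + t$ with $s,t \in S$ nonzero; since $n$ is the smallest nonzero element of $S$, both $s$ and $t$ are at least $n$. Applying the Arf characterization (``$s,t,u \in S$ and $s,t \geq u$ imply $s+t-u \in S$'') with $u = n$ yields $s + t - n = a_r - n \in S$, a strictly smaller element in the residue class $r$, contradicting the choice of $a_r$. Hence $a_r \in G(S)$.

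Combining both inclusions gives $G(S) = \{n\} \cup \{a_r : 1 \leq r \leq n-1\}$, a set of exactly $n$ elements, so the embedding dimension equals the multiplicity. The only real obstacle is the indecomposability argument in the second paragraph, which is precisely the place where the Arf hypothesis is used; everything else is routine bookkeeping with the residue-minimal elements.
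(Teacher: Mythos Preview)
Your proof is correct and follows essentially the same approach as the paper: both arguments single out the minimal element of each residue class modulo the multiplicity and use the Arf condition with $u=n$ to show that such an element cannot decompose as a sum of two nonzero elements, hence lies in $G(S)$. The only cosmetic difference is that you prove both inclusions to obtain the exact description $G(S)=\{n\}\cup\{a_r:1\le r\le n-1\}$, whereas the paper only argues the inequality $\lvert G(S)\rvert\ge n$ and invokes the earlier Remark for the reverse bound.
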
 \begin{proof} Let $n_1$ be the multiplicity
of the Arf semigroup $S$, we will show that embedding dimension is greater or equal to $n_1$, which is sufficient due
to the above Remark. Let $r$ be any residue class modulo $n_1$ and let $S_r=\{s\in S:s\equiv r$ mod $n_1\}$. The
set $S_r$ is non empty so there exists a minimal element $s$ in it. If $s$ was not a generator then $s=t+u$ for
$t,u\in S\setminus\{0\}$, so thanks to the Arf condition $s-n_1=t+u-n_1\in S_r$, which is a contradiction. Hence for
every residue class modulo $n_1$ there is a generator belonging to it. \end{proof}

\begin{definition} Let $S$ be a semigroup with $G(S)=\{n_1,\dots,n_k\}$. Here and in the sequel we assume that
$n_1<\dots<n_k$. Then the blowup of $S$ is a semigroup $S'$ generated
by $n_1,n_2-n_1,\dots,n_k-n_1$. \end{definition}

\begin{proposition}\label{blowup} The blowup of an Arf semigroup $S$ with multiplicity $n_1$ is a semigroup $S''=S(n_1)=\{s-n_1:\;s\in S,\;s\geq
n_1\}$. \end{proposition}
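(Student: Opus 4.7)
The plan is to prove $S' = S(n_1)$ by double inclusion, exploiting the Arf hypothesis to get the semigroup structure of $S(n_1)$ for free.

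First I would note that, directly from the definition of an Arf semigroup applied to the element $n_1 \in S$, the set $S(n_1) = \{s - n_1 : s \in S, s \geq n_1\}$ is itself a (numerical) semigroup. This is the only place where the Arf condition is really used.

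For the inclusion $S' \subseteq S(n_1)$, since $S(n_1)$ is a semigroup, it suffices to show that each of the generators $n_1, n_2 - n_1, \dots, n_k - n_1$ of $S'$ lies in $S(n_1)$. For $i \geq 2$, the element $n_i - n_1$ is in $S(n_1)$ because $n_i \in S$ and $n_i > n_1$. For the generator $n_1$ itself, observe that $2n_1 \in S$ and $2n_1 \geq n_1$, so $2n_1 - n_1 = n_1 \in S(n_1)$.

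For the reverse inclusion $S(n_1) \subseteq S'$, take any $s \in S$ with $s \geq n_1$ and write $s = \sum_{i=1}^{k} a_i n_i$ with nonnegative integers $a_i$. Since $s \geq n_1 > 0$, the sum $A := \sum a_i$ is at least $1$, so I can rearrange
$$s - n_1 = \sum_{i=1}^{k} a_i (n_i - n_1) + (A-1)n_1.$$
The $i=1$ summand vanishes, and all remaining terms are nonnegative integer multiples of generators of $S'$, hence $s - n_1 \in S'$.

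I do not expect a real obstacle here; the only subtlety is recognising that the Arf property is exactly what gives $S(n_1)$ the structure needed so that checking membership of generators is enough for one inclusion, and that the other inclusion is a purely formal algebraic rewriting using $A \geq 1$.
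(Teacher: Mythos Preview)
Your proof is correct and follows essentially the same double-inclusion approach as the paper: use the Arf hypothesis to see that $S(n_1)$ is a semigroup, verify that the generators of $S'$ lie in $S(n_1)$, and then check the reverse inclusion directly. The only difference is that you spell out explicitly the rewriting $s - n_1 = \sum a_i(n_i-n_1) + (A-1)n_1$, whereas the paper simply declares this inclusion ``clear''.
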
 \begin{proof} Let $G(S)=\{n_1,\dots,n_k\}$, $S'$ the blow up of $S$ is contained in $S''$ because all its
generators are and $S''$ is a semigroup since $S$ is Arf. Elements of $S''$ are clearly contained in $S'$.\end{proof}

\begin{proposition} The blowup of an Arf semigroup is an Arf semigroup. \end{proposition}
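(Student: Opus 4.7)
The plan is to combine Proposition \ref{blowup}, which identifies the blowup $S'$ with $S(n_1) = \{s - n_1 : s \in S,\; s \geq n_1\}$, with the equivalent characterization of the Arf property stated right after the definition: a numerical semigroup $T$ is Arf iff for all $x, y, z \in T$ with $x, y \geq z$ one has $x + y - z \in T$. This reformulation is much more convenient here than the defining one, since it lets me work with arbitrary triples in $S'$ directly rather than having to verify the semigroup property of $S'(m)$ for every $m \in S'$.

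First I would take arbitrary elements $a, b, c \in S'$ with $a, b \geq c$. Using the identification $S' = S(n_1)$, I would lift them by setting $s = a + n_1$, $t = b + n_1$, $u = c + n_1$, which gives $s, t, u \in S$ satisfying $s, t, u \geq n_1$ and (clearly) $s, t \geq u$. Then I would apply the Arf property of $S$ to the triple $(s, t, u)$ to conclude that $s + t - u \in S$.

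To finish, I need to check that $s + t - u \geq n_1$ so that this element actually descends to an element of $S(n_1) = S'$. This is immediate: $s + t - u = s + (t - u) \geq s \geq n_1$, because $t \geq u$. Hence $(s + t - u) - n_1 \in S'$, and this element equals $a + b - c$, verifying the Arf condition for $S'$.

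I do not anticipate any serious obstacle here; the only thing that needs a little care is the inequality $s + t - u \geq n_1$ needed to apply Proposition \ref{blowup} in the reverse direction, and that follows at once from $s \geq n_1$ and $t \geq u$. The brevity of the argument is essentially the point: once one has the description $S' = S(n_1)$ and the symmetric reformulation of the Arf condition, the statement is a direct translation.
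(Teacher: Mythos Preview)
Your proof is correct and is essentially identical to the paper's: both lift a triple $a,b,c\in S'$ to $s,t,u\in S$ via $S'=S(n_1)$, apply the Arf condition in $S$, and descend $s+t-u$ back to $S'$. The only difference is that you spell out the check $s+t-u\geq n_1$ needed to descend, which the paper leaves implicit.
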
 \begin{proof} Let $s',t'\geq u'$ and $s',t',u'\in S'$ the blowup
of an Arf semigroup $S$ with multiplicity $n_1$. Then $s'=s-n_1, t'=t-n_1, u'=u-n_1$ and clearly $s,t\geq u$.
Hence $s'+t'-u'=(s+t-u)-n_1\in S'$. \end{proof}

\begin{remark}\label{same mult} If $G(S)=\{n_1,\dots,n_k\}$ for an Arf semigroup $S$ which has
the same multiplicity as its blowup $S'$ then $G(S')=\{n_1,n_2-n_1,\dots,n_k-n_1\}$. It is because
both have the same multiplicity and because they are Arf they have also the same number of generators. In
general multiplicity of a blowup can decrease, then some of $n_i-n_1$ are not generators any more. This remark and Propositions \ref{blowup}, \ref{pomoc} are the
only places where the assumption that both semigroups have the same multiplicity is used.\end{remark}

\begin{proposition}\label{pomoc}
For an Arf semigroup $S$ with $G(S)=\{n_1,\dots,n_k\}$ and its blowup $S'$ of the same multiplicity we have that 
\begin{itemize}
 \item $s\in S \Leftrightarrow s-n_1\in S'$ unless $s=0$
 \item $s\in S \Leftrightarrow s-2n_1\in S'$ unless $s=0$ or $s=n_l$
 \item $s\in S \Leftrightarrow s\in S'$ unless $s=n_l-n_1$ for $l\neq 1$.
\end{itemize}
\end{proposition}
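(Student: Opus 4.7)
The plan is to translate each statement about $S'$ into one about $S$ via Proposition \ref{blowup}, which gives $S' = \{s-n_1 : s\in S,\; s\geq n_1\}$. In particular, for any nonnegative integer $s$ one has $s\in S' \Leftrightarrow s+n_1\in S$, and for $s\geq n_1$ one has $s-n_1\in S' \Leftrightarrow s\in S$. All three bullets then reduce to questions about whether a particular element of $S$ remains in $S$ after subtracting $n_1$.

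The essential ingredient is an auxiliary lemma extracted from the Arf property: if $s\in S$ with $s\neq 0$ and $s\notin G(S)$, then $s-n_1\in S$. I would prove this by writing $s=t+u$ with $t,u\in S\setminus\{0\}$; since $n_1$ is the multiplicity of $S$, both $t$ and $u$ are at least $n_1$, so the Arf condition applied to $t,u,n_1$ yields $t+u-n_1 = s-n_1\in S$.

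With the translation and the lemma in place, the three bullets are routine. The first is just the translation combined with the fact that every nonzero element of $S$ is at least $n_1$. For the third, $S\subseteq S'$ is immediate from $n_1\in S$ and closure under addition, while the reverse inclusion modulo the stated exceptions follows from the contrapositive of the lemma: if $s\in S'\setminus S$, then $s+n_1\in S$ but $s+n_1$ cannot be a nonzero non-generator, so $s+n_1=n_l$ for some $l$, forcing $s=n_l-n_1$; the case $l=1$ would give $s=0\in S$, a contradiction, so $l\neq 1$. For the second bullet I would chain the first with the lemma: $s-2n_1\in S'$ is equivalent to $s-n_1\in S$ (via the first bullet applied one level down, which requires $s-n_1\neq 0$, i.e.\ $s\neq n_1$), and the latter is equivalent to $s\in S$ precisely outside the excluded cases $s=0,n_l$, the forward passage using the lemma and the reverse using closure of $S$ under addition.

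The main obstacle is really just spotting the Arf-based auxiliary lemma; once it is in hand, the only remaining delicacy is tracking when $s-n_1$ happens to vanish, which is exactly what pins down the stated exceptions.
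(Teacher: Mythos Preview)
Your proof is correct and follows essentially the same strategy as the paper's: both reduce everything to Proposition~\ref{blowup} plus the Arf property. The only notable difference is organizational --- you extract a single reusable lemma (nonzero non-generators of $S$ absorb subtraction of $n_1$) and apply it uniformly, whereas the paper argues each bullet ad hoc and, for the third bullet, invokes the Arf property of $S'$ rather than of $S$.
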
 
\begin{proof}
The first equivalence follows from Proposition \ref{blowup}, to prove the second one it is enough to see that if  $s=w+w'$ for $w,w'\in S\setminus\{0\}$ then
$s-2n_1=(w-n_1)+(w'-n_1)\in S'$. And also $n_1,n_l-n_1=n_1+(n_l-2n_1)$ are generators of $S'$ so $n_l-2n_1$ does not belong to $S'$. The third one follows from the fact that if 
$s=w+w'$ for $w,w'\in S'\setminus\{0\}$ then due to Arf property of $S'$ $w+w'-n_1\in S'$ and so $s=(w+w'-n_1)+n_1\in S$, otherwise $s=n_j-n_1\notin S$.
\end{proof}

With a numerical semigroup $S$ with $G(S)=\{n_1,\dots,n_k\}$ and a field $k$ we associate the semigroup ring
$R=k[t^{n_1},\dots,t^{n_k}]$. Let $T$ be the graded polynomial ring $k[X_1,\dots,X_k]$ with $deg(X_i)=n_i$. Graded ring $R$ is isomorphic to $T/I$ where $I$ is the ideal describing relations between generators of $S$ (namely $I$ is generated by binomials $X^\alpha-X^\beta$ for which $\Sigma
n_i\alpha_i=\Sigma n_i\beta_i$). Then $Tor_i^T(R,k)$ gets a grading induced by the grading of $T$. We denote the part of degree $s$ by $Tor_i^T(R,k)_s$.

\begin{definition} The Betti numbers are $\beta_{i,s}=dim_kTor_i^T(R,k)_s$.\end{definition}

For $s\in S$ let $\Delta_s$ be the simplicial complex on the set of vertices $\{1,\dots,k\}$ consisting of faces
$\{n_{i_1},\dots,n_{i_j}\}$ such that $s-(n_{i_1}+\cdots+n_{i_j})\in S$.

\begin{lemma} \label{bh} $\beta_{i,s}=dim_k\tilde{H}_{i-1}(\Delta_s)$ \hfill$\square$\end{lemma}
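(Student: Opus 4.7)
The plan is to compute $Tor_i^T(R,k)$ via the Koszul resolution of $k$ over $T$ and then identify its degree-$s$ piece with the reduced simplicial chain complex of $\Delta_s$ (up to a shift in homological degree). This is the classical Hochster-type formula adapted to the semigroup ring $R$.

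Concretely, I would take $K_\bullet=\bigwedge^\bullet T^k$, the Koszul resolution of $k$ over $T$ coming from the regular sequence $X_1,\dots,X_k$. Its free module in homological degree $i$ has a $T$-basis $\{e_I:|I|=i\}$ indexed by $i$-subsets $I=\{i_1<\cdots<i_i\}\subseteq\{1,\dots,k\}$, with $\deg e_I=n_I:=\sum_{l\in I}n_l$. Since $R$ has $k$-basis $\{t^a:a\in S\}$, the degree-$s$ piece $(K_i\otimes_T R)_s$ has $k$-basis
\[
\{\,t^{s-n_I}\otimes e_I \;:\; |I|=i,\ s-n_I\in S\,\}.
\]
The condition $s-n_I\in S$ is precisely the defining condition for $I$ to be an $(i-1)$-dimensional face of $\Delta_s$, so this piece is canonically identified with the reduced chain group $\tilde{C}_{i-1}(\Delta_s;k)$; the case $i=0$, $I=\emptyset$ gives the augmentation $\tilde{C}_{-1}=k$ iff $s\in S$.

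Next I would verify that the Koszul differential transports to the simplicial boundary. A direct computation gives
\[
d(t^{s-n_I}\otimes e_I)=\sum_{l=1}^{i}(-1)^{l-1}\,t^{s-n_{I\setminus\{i_l\}}}\otimes e_{I\setminus\{i_l\}},
\]
and each summand lies in the corresponding basis of $(K_{i-1}\otimes_T R)_s$ because $s-n_{I\setminus\{i_l\}}=(s-n_I)+n_{i_l}\in S$ whenever $s-n_I\in S$. This is exactly the reduced simplicial boundary map on $\Delta_s$. Taking $i$-th homology then yields $Tor_i^T(R,k)_s\cong\tilde{H}_{i-1}(\Delta_s;k)$, hence $\beta_{i,s}=\dim_k\tilde{H}_{i-1}(\Delta_s)$.

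No serious obstacle is expected: the argument is essentially bookkeeping once the Koszul model is in place. The only point requiring care is the index shift between the Koszul level $i$ (a wedge of $i$ generators) and the simplicial dimension $i-1$ (forced by placing the empty face in dimension $-1$), together with checking that the sign conventions for the Koszul differential and the reduced simplicial boundary agree.
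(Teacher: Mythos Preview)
Your argument is correct: tensoring the Koszul resolution of $k$ with $R$ and reading off the degree-$s$ strand identifies $(K_\bullet\otimes_T R)_s$ with the reduced simplicial chain complex of $\Delta_s$ (shifted by one), and the Koszul differential indeed becomes the simplicial boundary. This is the standard proof.

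The paper, however, does not supply its own argument for this lemma at all; it simply quotes the result from the literature (Miller--Sturmfels, \emph{Combinatorial Commutative Algebra}, Theorem~9.2, and Bruns--Herzog). So there is no ``paper's proof'' to compare against beyond noting that your Koszul computation is exactly the classical argument those references contain. In that sense your write-up is more self-contained than the paper itself.
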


In the above lemma $\tilde{H}$ denotes the reduced homology and $n_1,\dots,n_k$ has to be elements of $G(S)$, not only a
generating set. The above lemma is proven in \cite{mist} page 175 Theorem 9.2, see also \cite{brhe}. 

\section{Main result}

\begin{theorem} Let $S$ be an Arf semigroup, such that its blowup $S'$ has the same multiplicity $n_1$, then 
$\beta_{i,s}(S')=\beta_{i,{s+(i+1)n_1}}(S)$. \end{theorem}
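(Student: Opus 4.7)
The plan is to use Lemma~\ref{bh} to reduce the claim to the isomorphism
\[
\tilde{H}_{i-1}(\Delta'_s) \cong \tilde{H}_{i-1}(\Delta_{s+(i+1)n_1}).
\]
By Remark~\ref{same mult} the generators of $S'$ are $n_1, n_2 - n_1, \ldots, n_k - n_1$, so both simplicial complexes are defined on the common vertex set $\{1,\ldots,k\}$. For a subset $V \subseteq \{1,\ldots,k\}$ of size $j$, set $\epsilon = 1$ if $1 \in V$ and $\epsilon = 0$ otherwise, and let $W = V \setminus \{1\}$. Unpacking the definitions gives
\[
V \in \Delta'_s \iff A(V) := s + (j-2\epsilon)n_1 - \sum_{w \in W} n_w \in S',
\]
\[
V \in \Delta_{s+(i+1)n_1} \iff B(V) := s + (i+1-\epsilon)n_1 - \sum_{w \in W} n_w \in S,
\]
with $B(V) - A(V) = (i+1+\epsilon - j)\, n_1$.

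The key point is that the shift $(i+1)n_1$ is engineered so that for the dimensions relevant to $\tilde{H}_{i-1}$---namely $j \in \{i, i+1\}$, i.e.\ simplices of dimension $i-1$ and $i$---and for both values of $\epsilon$, the difference $B(V) - A(V)$ takes one of the values $0, n_1, 2n_1$. These three values are covered respectively by clauses (iii), (i), (ii) of Proposition~\ref{pomoc}, so one obtains a bijective correspondence between the faces of $\Delta'_s$ and of $\Delta_{s+(i+1)n_1}$ in dimensions $i-1$ and $i$, except for a small set of \emph{exceptional} faces that trigger the ``unless''-clauses of Proposition~\ref{pomoc}: namely those $V$ with $B(V) = 0$, $B(V) = n_l$, or $A(V) = n_l - n_1$ for some $l$.

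To conclude, I would promote this bijection to an identity chain map on the common part of the $(i-1)$- and $i$-skeleta and verify that it induces an isomorphism on $\tilde{H}_{i-1}$. The main obstacle is the combinatorial bookkeeping of the exceptional faces: one has to pair each extra face on one side with a matching extra face on the other side, or exhibit it as a boundary, compatibly with the simplicial differential. A secondary subtlety arises at dimension $i-2$ for faces containing vertex~$1$, where $B - A = 3n_1$ falls outside Proposition~\ref{pomoc}; however, by simplicial closure every $(i-1)$-vertex subset of an $i$-vertex face of either complex is automatically a face of that complex, so only inherited $(i-1)$-subfaces need to be compared, and for those the equivalence follows by applying Proposition~\ref{pomoc} to an ambient $i$-vertex face.
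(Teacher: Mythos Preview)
Your setup is correct and coincides with the paper's approach: reduce via Lemma~\ref{bh}, match faces of size $i$ and $i+1$ via the identity on $\{1,\ldots,k\}$, and use the three clauses of Proposition~\ref{pomoc} to control the discrepancy $B(V)-A(V)\in\{0,n_1,2n_1\}$. Your remark about dimension $i-2$ is also fine and is used implicitly in the paper.

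However, the proposal is not a proof: you stop precisely where the actual argument begins. The ``combinatorial bookkeeping of the exceptional faces'' that you flag as the main obstacle is the substance of the theorem, and your one-sentence description of how to handle it (pair extras across the two complexes, or exhibit them as boundaries) is both vague and inaccurate. In the paper the exceptional faces fall into four types, three of which lie in $\Delta_t$ and one in $\Delta'_s$; they are \emph{not} paired across the two complexes. Each type requires its own argument:
\begin{itemize}
\item An extra $(i-1)$-face of $\Delta_t$ with $B(V)=0$ is shown not to participate in any cycle, by checking that no other $(i-1)$-face shares the relevant $(i-2)$-subface (this uses that $n_{j_1}-n_l\notin S$ for $l\neq j_1$).
\item An extra $i$-face of $\Delta'_s$ with $A(V)=n_l-n_1$ is shown not to create a new boundary: its boundary is already the boundary of a combination of matched faces, obtained by adjoining the vertex $1$ (here the Arf property of $S'$ is used).
\item The remaining extra $(i-1)$-faces of $\Delta_t$ (those with $B(V)=n_l$, $1\in V$) and extra $i$-faces of $\Delta_t$ (those with $B(V)=0$, $1\in V$) must be analysed together: each such $i$-face has exactly the former as its codimension-one subfaces, distinct $i$-faces of this kind have disjoint boundaries, and a dimension count shows that adding one $i$-face and its $i$ exceptional subfaces raises both $\ker\partial_{i-1}$ and $\operatorname{im}\partial_i$ by exactly one.
\end{itemize}
None of this is automatic, and in particular the Arf hypothesis enters again in the last two items. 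Until these verifications are carried out, the proof is a plan rather than a proof.
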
 \begin{proof} Let $G(S)=\{n_1,\dots,n_k\}$, then by Remark \ref{same mult} 
$G(S')=\{n_1'=n_1,n_2'=n_2-n_1,\dots,n_k'=n_k-n_1\}$. Due to Lemma \ref{bh} we have to show that
$H_{i-1}(\Delta_s(S'))=H_{i-1}(\Delta_{s+(i+1)n_1}(S))$.

Let us fix $i$, denote $t=s+(i+1)n_1$ and compare the simplicial complexes $\Delta_s':=\Delta_s(S')$ and
$\Delta_t:=\Delta_t(S)$. Since we are looking on the $(i-1)$ homologies we can remove from both of them all
simplexes of dimension greater then $i$ and look only on faces of dimension $(i-1)$ - possible cycles or of dimension $i$ -
which give possible boundaries. For $m=i,i+1$ let us define the partial matching
$$\Delta_s'\ni \{j_1,\dots,j_m\} \leftrightarrow \{j_1,\dots,j_m\}\in \Delta_t$$ and let us
classify unmatched simplexes.
\newline

For $m=i$ there are two cases.\newline
First case $j_1\neq 1$. By Proposition \ref{pomoc} we have 
$$\{j_1,\dots,j_i\}\in\Delta_t \Leftrightarrow t-n_{j_1}-\cdots-n_{j_i}=u\in S \Leftrightarrow$$
$$\Leftrightarrow s-(n_{j_1}-n_1)-\cdots-(n_{j_i}-n_1)=u-n_1\in S' \Leftrightarrow \{j_1,\dots,j_i\}\in\Delta_s'$$ unless $u=0$.\newline
Second case $j_1=1$. Again by Proposition \ref{pomoc} we have that $$t-n_1-\cdots-n_{j_i}=u\in S \Leftrightarrow s-n_1-\cdots-(n_{j_i}-n_1)=u-2n_1\in S'$$
unless $u=0$ or $u=n_l$. 
\newline

For $m=i+1$ there are also two cases.\newline
First case $j_1\neq 1$. $$t-n_{j_1}-\cdots-n_{j_i}=u\in S \Leftrightarrow s-(n_{j_1}-n_1)-\cdots-(n_{j_i}-n_1)=u\in S'$$ unless $u=n_l-n_1$.\newline
Second case $j_1=1$. $$t-n_1-\cdots-n_{j_i}=u\in S \Leftrightarrow s-n_1-\cdots-(n_{j_i}-n_1)=u-n_1\in S'$$
unless $u=0$.
\newline

To conclude there are four types of unmatched faces:
\begin{enumerate}
 \item $\Delta_t$ has extra $(i-1)$-dimensional face $\{j_1,\dots,j_i\}$ for $t-n_{j_1}-\cdots-n_{j_i}=0$
 \item $\Delta_t$ has extra $(i-1)$-dimensional face $\{1,\dots,j_i\}$ for $t-n_1-\cdots-n_{j_i}=n_l$
 \item $\Delta_t$ has extra $i$-dimensional face $\{1,\dots,j_{i+1}\}$ for $t-n_1-\cdots-n_{j_{i+1}}=0$
 \item $\Delta_s'$ has extra $i$-dimensional face $\{j_1,\dots,j_{i+1}\}$ for $t-n_1-\cdots-n_{j_{i+1}}=n_l-n_1$ and
$j_r\neq 1$
\end{enumerate}

Let us observe that removing extra faces from the simplicial complexes $\Delta_t$ and $\Delta_s'$ does not change
$(i-1)$ homology.

The face $F_0$ of type 1 $\{j_1,\dots,j_i\}$ for $t-n_{j_1}-\cdots-n_{j_i}=0$ is not a member of any cycle. Namely if $\delta(\Sigma
\alpha_rF_r+\alpha_0F_0)=0$ we look at the coefficient of $\{j_2,\dots,j_i\}$. If it was a boundary of a
face $\{l,j_2,\dots,j_i\}\in \Delta_t$ we would get $t-n_l-n_{j_2}-\cdots-n_{j_i}=n_l-n_{j_1}\in S$, which is a
contradiction. So this coefficient is $\alpha_0$ and it is equal to $0$ hence removing $F_0$ does not affect
$(i-1)$ homology.

The face $F_0$ of type 4 $\{j_1,\dots,j_{i+1}\}$ for $t-n_{j_1}-\cdots-n_{j_{i+1}}=n_l-n_1$ for $j_r\neq 1$ does
not create any new boundary in $\Delta_s'$. We have that for any $r=1,\dots,i+1$
$$t-n_1-n_{j_1}-\cdots-n_{j_{r-1}}-n_{j_{r+1}}-\cdots-n_{j_{i+1}}=(n_l-n_1)+(n_r-n_1)\in S'$$ so $\{1,j_1,\dots,j_{r-1},j_{r+1},\dots,j_{i+1}\}\in
\Delta_s'$ and is matched. \newline We have that $\delta(\{1,j_1,\dots,j_{i+1}\})=\Sigma \alpha_jF_j+\alpha_0F_0$ is a
linear combination of matched faces and $F_0$ and the boundary of this linear combination is equal to $0$, so
$\Sigma \alpha_r\delta F_r+\alpha_0\delta F_0=0$, hence $\delta F_0$ does not enlarge dimension of subspace of
boundaries.

Consider the face $F_0$ of type 2 $\{1,\dots,j_i\}$ for $t-n_1-\cdots-n_{j_i}=n_l$. As in the case of faces of type 1 we
look for other faces with $\{j_2,\dots,j_i\}$ as a member of its boundary. If there is no such then $F_0$ is not a
member of any cycle and removing it does not change $(i-1)$ homology. If there is one, with an extra vertex $r\neq 1$, then $t-n_r-\cdots-n_{j_i}=n_l+n_1-n_r=u\in S$. We
get $n_l-n_1=(u-n_1)+(n_r-n_1)\in S'+S'$ so $u=n_1,r=l$, since $n_l-n_1$ is a generator of $S'$. Hence $l\neq
j_2,\dots,j_i$ and all such faces $F_0$ are members of a boundary of a face of type 3 $\{1,\dots,j_i,l\}$ for
$t-n_1-\cdots-n_{j_i}-n_l=0$. We have also that two different faces of type 3 have disjoint boundaries, because
they have sum equal to $0$. So faces of type 2 and 3 can be considered together separately for different faces
of type 3. \newline
Let us fix a face of type 3 $\{1,j_2,\dots,j_{i+1}\}$ and $i$ its subfaces of type 2. We want to show that adding them does not change $(i-1)$ homology.
By looking at the boundary simplexes $\{j_2,\dots,j_{r-1},j_{r+1}, \dots,j_{i+1}\}$ for $r=2,\dots,i+1$ we get $i$ equations that has to be satisfied for a linear combination to be a cycle. So by adding extra faces to $\Delta_t$ a cycle can contain $i+1$ new simplexes of dimension $(i-1)$ but has to satisfy $i$ new linearly independent equations. This enlarge the dimension of kernel of $\delta_{i-1}$ by at most one. But $\delta_i(\{1,j_2,\dots,j_{i+1}\})$ is a boundary so it is also a
cycle, hence the dimension of kernel is greater by exactly one. The dimension of the image of $\delta_i$ is also greater by
one since by adding a face of type 3 we add a new boundary which contains new faces, hence the $(i-1)$ homology is the same.\end{proof}

\section*{Acknowledgements}

We would like to thank very much Ralf Fr\"{o}berg for introduction to this subject, posing this problem and many
interesting talks. We are also grateful to Mats Boij and Alexander Engstr\"{o}m for their help.

\bigskip

\end{document}